\newcommand{\Aut}[1]{\text{Aut}(#1)}
\newlist{case}{enumerate}{2}
\setlist[case,1]{label=Case \arabic*:, before=\raggedright}
\setlist[case,2]{label=Subcase \arabic*:, before=\raggedright}
\newtheorem{theorem}{Theorem}
\theoremstyle{definition}
\newtheorem{lemma}[theorem]{Lemma}
\newtheorem{proposition}[theorem]{Proposition}
\newtheorem{conjecture}[theorem]{Conjecture}
\author[1]{Alejandra Brewer}
\author[2]{Emma Farnsworth}
\author[3]{Natalie Gomez}
\author[4]{Adam Gregory}
\author[5]{Quindel Jones}
\author[6]{Herlandt Lino}
\author[6]{Darren Narayan}
\affil[1]{Auburn University}
\affil[2]{University of Rochester}
\affil[3]{Texas State University}
\affil[4]{University of Florida}
\affil[5]{Virginia Commonwealth University}
\affil[6]{Rochester Institute of Technology}
\newcommand{\course}{The Asymmetric Index of a Graph}
\newcommand{\assignment}{}
\title{\course\\\assignment}
\date{26 July 2019}
\begin{document}
\maketitle
\begin{abstract}
A graph $G$ is \textit{asymmetric} if its automorphism group is trivial. Asymmetric graphs were introduced by Erd\H{o}s and R\'{e}nyi (1963). They suggested the problem of starting with an asymmetric graph and removing some number $r$ of edges and/or adding some number $s$ of edges so that the resulting graph is non-asymmetric. Erd\H{o}s and R\'{e}nyi defined the \textit{degree of asymmetry} of a graph to be the minimum value of $r+s$. In this paper, we define another property that measures how close a given non-asymmetric graph is to being asymmetric. We define the \textit{asymmetric index} of a graph $G$, denoted $ai(G)$, to be the minimum of $r+s$ so that the resulting graph $G$ is asymmetric. 

\indent We investigate the asymmetric index of both connected and disconnected graphs. We prove that for any non-negative integer $k$, there exists a graph $G$ where $ai(G)=k$. We show that the asymmetric index of a cycle with at least six vertices is two, and provide a complete characterization of all possible pairs of edges that can be added to a cycle to create an asymmetric graph. In addition we determine the asymmetric index of paths, certain circulant graphs, Cartesian products involving paths and cycles, and bounds for complete graphs, and complete bipartite graphs. 
\end{abstract}

\section{Introduction}
	
\hspace{15pt}  We consider undirected graphs without multiple edges or loops. A graph $G$ is \textit{asymmetric} if its automorphism group is trivial. To avoid confusion with \textit{symmetric} graphs where the automorphism group of vertices is all permutations of vertices, a graph with a non-trivial automorphism group of vertices will be referred to as non-asymmetric. Asymmetric graphs were introduced by Erd\H{o}s and R\'{e}nyi \cite{Erdos} in 1963. Any asymmetric graph can be made non-asymmetric by removing some $r$ number of edges and adding some $s$ number of edges. Erd\H{o}s and R\'{e}nyi defined the \textit{degree of asymmetry} $A(G)$ of a graph $G$ to be the minimum of $r+s$. In this paper, we define a property that measures how close a non-asymmetric graph is to being asymmetric. We define the \textit{asymmetric index} of a graph $G$, denoted $ai(G)$, to be the minimum of $r+s$ so that the resulting graph $G$ is asymmetric. At first glance it might appear that calculating the degree of asymmetry of a graph is the inverse problem of calculating the asymmetric index - thinking that adding (removing) edges to (from) an asymmetric graph to obtain a non-asymmetric graph would be the same as removing (adding) edges from (to) a non-asymmetric graph to obtain an asymmetric graph. However, Erd\H{o}s and R\'{e}nyi \cite{Erdos} sought the minimum value of $r+s$ to create symmetry, while in this paper we are seeking the minimum value of $r+s$ to eliminate symmetry. In problems involving the degree of asymmetry, graphs that are far from being asymmetric, such as complete graphs, are not encountered. In fact, if we let $n$ denote the number of vertices in a graph, we will show that when $n\geq 6$, $ai\left( K_{n}\right) \geq \frac{6n}{7}$ but Erd\H{o}s and R\'{e}nyi showed that the degree of asymmetry of a graph with $n$ vertices is less than or equal to $\left\lceil \frac{n-1}{2}\right\rceil $ for all graphs $G$.


For a graph $G$ we will use $V(G)$ to denote the set of vertices, and $E(G)$ to denote the set of edges. The edge between vertices $u$ and $v$ will be denoted $uv$. Two graphs $G$ and $H$ are \textit{isomorphic} if there is a bijection $f:G\rightarrow H$ where $uv\in E(G)\Leftrightarrow f(u)f(v)\in E(H)$. Recall that $f$ is an automorphism if it is an isomorphism from a graph to itself, and the set of all automorphisms of a graph form a permutation group under function composition. We will use $\Aut{G}$ to denote the automorphism group of a graph $G$. The \textit{complement} of a graph $G$ will be denoted $\overline{G}$.  
The \textit{degree} of a vertex $v$ is the number of edges incident to $v$. The \textit{distance} between two vertices $u$ and $v$ is the number of edges in a shortest path between $u$ and $v$ and will be denoted $d(u,v)$. For graphs $G$ and $H$ with disjoint vertex sets, the \textit{join} of $G$ and $H$ is denoted $G\vee H$ and is a graph with the vertices and edges of $G$ and $H$, along with edges between each vertex of $G$ and each vertex of $H$.  Given two graphs $H$ and $K$, with vertex sets $V(H)$ and $V(K)$ the Cartesian product $G=H\Box K$ is a graph where $V(G)=\{(u_{i},v_{j})$ where $u_{i}\in V(H)$ and $v_{j}\in V(K)\}$, and $E(G)=\left\{ (u_{i},v_{j}),(u_{k},v_{l})\right\} $ if and only if $i=k$ and $v_{j}$ and $v_{l}$ are adjacent in $K$ or $j=l$ and $u_{i}$ and $u_{k}$ are adjacent in $H$. For any undefined notation, please see the text \cite{West} by West.

Many papers on asymmetric graphs have followed the seminal paper by Erd\H{o}s and R\'{e}nyi \cite{Erdos}. These include papers by Schweitzer and Schweitzer \cite{Schweitzer}, and L. Quintas \cite{Quintas}. A comprehensive treatment of asymmetric graphs is given in the text by Godsil and Royle \cite{Godsil}.

In this paper we investigate the asymmetric index of a graph for several families of graphs. 
We prove that in some cases vertex-transitive graphs and asymmetric graphs are separated by as few as two edges. We show that the asymmetric index of a cycle with at least six vertices is two, and provide a complete characterization of all possible pairs of edges that can be added to a cycle to create an asymmetric graph. In addition, we obtain the asymmetric index for certain circulant graphs, Cartesian products involving paths and cycles, and bounds for complete graphs, and complete bipartite graphs.

\section{The Index of Asymmetry}

\indent We begin by restating an elementary property regarding asymmetric graphs. 
\begin{proposition} \label{Prop1}
\hypertarget{Prop1}{Given} any graph $G$, $\Aut{G} = \Aut{\overline{G}}$.
\end{proposition}

As a consequence, if $G$ is an asymmetric graph, then the complementary graph $\overline{G}$ is also asymmetric. This leads to the following proposition.

\begin{proposition}
Given any graph $G$, $ai(G) = ai(\overline{G})$.
\end{proposition}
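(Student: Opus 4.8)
The plan is to exploit Proposition 1.1 together with the observation that complementation is a cost-preserving involution on edge-modifications. Suppose $G'$ is an asymmetric graph obtained from $G$ by deleting a set $R$ of edges and adding a set $S$ of edges, with total cost $r+s = |R|+|S|$. The first step is to recast this cost intrinsically: since $R = E(G)\setminus E(G')$ and $S = E(G')\setminus E(G)$ are disjoint with union $E(G)\mathbin{\triangle} E(G')$, the number of operations transforming $G$ into $G'$ is exactly $|E(G)\mathbin{\triangle} E(G')|$. Working with the symmetric difference, rather than separately tracking $r$ and $s$, is the conceptual key, because under complementation an addition to $G$ becomes a deletion from $\overline{G}$ and vice versa.

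Next I would show complementation preserves this cost. For any pair $\{u,v\}$, it lies in $E(\overline{G})\mathbin{\triangle} E(\overline{G'})$ precisely when it lies in exactly one of $E(\overline{G})$ and $E(\overline{G'})$; since membership in a complement negates membership in the original, this occurs exactly when $\{u,v\}$ lies in exactly one of $E(G)$ and $E(G')$. Hence $E(\overline{G})\mathbin{\triangle} E(\overline{G'}) = E(G)\mathbin{\triangle} E(G')$, so the number of edge operations turning $\overline{G}$ into $\overline{G'}$ equals the number turning $G$ into $G'$.

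I would then invoke Proposition 1.1: because $\Aut{G'} = \Aut{\overline{G'}}$, the graph $G'$ is asymmetric if and only if $\overline{G'}$ is asymmetric. Combining this with the cost identity above, every sequence of $r+s$ operations transforming $G$ into an asymmetric graph $G'$ yields a sequence of $r+s$ operations transforming $\overline{G}$ into the asymmetric graph $\overline{G'}$. Taking the minimum over all such $G'$ gives $ai(\overline{G}) \le ai(G)$. Applying the identical argument with $\overline{G}$ in place of $G$, and using $\overline{\overline{G}} = G$, yields $ai(G) \le ai(\overline{G})$, and equality follows.

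I do not anticipate a serious obstacle, as the whole argument reduces to verifying that complementation is a bijection on labeled graphs preserving both the symmetric-difference cost and, through Proposition 1.1, the property of being asymmetric. The only point demanding care is the bookkeeping noted above: one must phrase the transformation via symmetric difference so that the interchange of additions and deletions under complementation does not alter the total count $r+s$.
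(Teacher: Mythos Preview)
Your proposal is correct and follows essentially the same approach as the paper: both arguments observe that complementation swaps edge-additions with edge-deletions while preserving the total count $r+s$, and that the resulting target graph is asymmetric because complements of asymmetric graphs are asymmetric. Your version is simply more explicit---you formalize the cost via the symmetric difference, invoke Proposition~1.1 by name, and write out both inequalities---whereas the paper compresses all of this into two sentences.
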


\begin{proof}
Suppose $G$ can be made into an asymmetric graph by removing some set $R$ with $r$ edges and adding some set $S$ with $s$ edges. Then by definition of $\overline{G}$, if we now add those same $r$ edges in $R$ to $\overline{G}$ and remove the same $s$ edges in $S$ from $\overline{G}$, we produce an asymmetric graph.
\end{proof}
We continue by presenting two elementary results involving the join and disjoint union of two non-isomorphic asymmetric graphs.    
    
\begin{proposition}
If $G$ and $H$ are non-isomorphic asymmetric graphs then $G\vee H$ is asymmetric. 
\end{proposition}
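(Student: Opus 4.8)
The plan is to show that any automorphism $\sigma$ of $G \vee H$ must fix the vertex sets $V(G)$ and $V(H)$ setwise, so that $\sigma$ restricts to an automorphism of $G$ and an automorphism of $H$ separately; since both $G$ and $H$ are asymmetric, each restriction is the identity, forcing $\sigma$ to be the identity on all of $G \vee H$.

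The key structural input is that the join adds every edge between $V(G)$ and $V(H)$, which means degrees in the join are inflated: a vertex $v \in V(G)$ has degree $\deg_G(v) + |V(H)|$, while a vertex $w \in V(H)$ has degree $\deg_H(w) + |V(G)|$. So I would first try to separate the two parts by a degree or adjacency argument. The cleanest route is to observe that $V(G)$ and $V(H)$ are distinguished combinatorially: for instance, any two vertices of $G$ together have all of $H$ as common neighbors, and one can try to recover the partition $\{V(G), V(H)\}$ as a canonical invariant (e.g. via the structure of non-adjacencies, since within the join the only non-edges are the non-edges internal to $G$ and the non-edges internal to $H$). Once the partition into the two "sides" is shown to be preserved by every automorphism, the restriction argument finishes the proof.

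**The hard part will be** ruling out an automorphism that swaps the two sides, i.e. carries $V(G)$ onto $V(H)$. A naive degree argument fails when $|V(G)| = |V(H)|$, because the degree sequences of the two sides could in principle overlap. Here is where the hypothesis that $G$ and $H$ are \emph{non-isomorphic} must do its work: if some $\sigma$ sent $V(G)$ to $V(H)$ bijectively and preserved adjacency, then restricted to the internal edges it would exhibit an isomorphism $G \cong H$ (since the join adds the same complete bipartite edge set regardless of orientation, only the internal edges constrain the map), contradicting non-isomorphism. I would therefore split into two cases: either $\sigma$ preserves the partition, handled by the asymmetry of each piece, or $\sigma$ swaps the parts, handled by deriving an isomorphism $G \cong H$ and invoking the contradiction. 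The subtle point requiring care is justifying that $\sigma$ must either fix both sides setwise or swap them — i.e. that no vertex of $G$ can map into $H$ while another stays in $G$ — which again should follow from a local invariant such as the common-neighborhood or co-degree structure distinguishing the two classes of vertices.
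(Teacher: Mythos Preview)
Your overall strategy is sound and in fact more careful than the paper's own argument, which simply asserts that ``each vertex of $G$ will be unique in $G\vee H$'' without addressing cross-side automorphisms at all. You correctly isolate the crux: one must show that every automorphism of $G\vee H$ either preserves the partition $\{V(G),V(H)\}$ or swaps the two parts wholesale.

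Unfortunately, the ``subtle point'' you flag is not merely subtle --- it cannot be resolved, because the proposition as stated is false. Pass to complements: $\overline{G\vee H}=\overline{G}+\overline{H}$, so $\Aut{G\vee H}=\Aut{\overline{G}+\overline{H}}$, and automorphisms of a disjoint union permute connected components. Nothing forbids a component of $\overline{G}$ from being isomorphic to a component of $\overline{H}$ even when $G\not\cong H$. Concretely, take three pairwise non-isomorphic connected asymmetric graphs $A,B,C$ and set $\overline{G}=A+B$, $\overline{H}=A+C$. Then $G$ and $H$ are asymmetric (each complement has only the trivial automorphism) and non-isomorphic (since $B\not\cong C$), yet $\overline{G\vee H}=2A+B+C$ admits the nontrivial automorphism swapping the two copies of $A$. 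That automorphism moves \emph{some} vertices of $V(G)$ into $V(H)$ while fixing others, so no local invariant can separate the two sides, and your proposed case split collapses. The paper's proof has exactly the same gap; it is just brief enough not to notice it. Your two-case argument does go through cleanly under the extra hypothesis that $\overline{G}$ and $\overline{H}$ are both connected, which is the natural repair.
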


\begin{proof}
Since $G$ is an asymmetric graph, and in $G\vee H$, each vertex $u$ in $G$ has the same adjacencies to vertices in $H$, each vertex of $G$ will be unique in $G\vee H$. Similarly each vertex in $H$ will be unique in $G\vee H$.
\end{proof}

\begin{proposition}
If $G$ and $H$ are non-isomorphic asymmetric graphs then $G+H$ is asymmetric.
\end{proposition}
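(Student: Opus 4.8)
The plan is to reduce this to the join case already handled in Proposition 1.3, using complementation. The crucial structural identity is that the complement of a disjoint union is the join of the complements, $\overline{G+H}=\overline{G}\vee\overline{H}$: within $V(G)$ or within $V(H)$ the adjacencies are merely complemented, while any vertex of $G$ and any vertex of $H$ are non-adjacent in $G+H$ and therefore adjacent in the complement. First I would record, from the consequence of Proposition 1.1, that $\overline{G}$ and $\overline{H}$ are again asymmetric, and that they remain non-isomorphic since $G\cong H$ if and only if $\overline{G}\cong\overline{H}$. Proposition 1.3 then yields that $\overline{G}\vee\overline{H}=\overline{G+H}$ is asymmetric, i.e. $\Aut{\overline{G+H}}$ is trivial. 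Finally Proposition 1.1 gives $\Aut{G+H}=\Aut{\overline{G+H}}$, so $G+H$ is asymmetric.

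A more hands-on route, which I would keep as a cross-check, is direct. Let $\sigma\in\Aut{G+H}$. Since any automorphism carries connected components to isomorphic connected components, $\sigma$ permutes the components of $G+H$. The key step — and the one I expect to be the main obstacle — is to show that $\sigma$ maps $V(G)$ onto $V(G)$ and $V(H)$ onto $V(H)$, i.e. that it cannot shuffle part of $G$ into $H$. Once this separation is established, $\sigma$ restricts to an automorphism of $G$ and an automorphism of $H$; both are trivial by hypothesis, forcing $\sigma=\mathrm{id}$.

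When $G$ and $H$ are connected the separation step is immediate: $G$ and $H$ are themselves the two components, and the hypothesis $G\not\cong H$ rules out the only possible nontrivial interchange. The delicate point, worth flagging explicitly, is the disconnected case: if some component of $G$ is isomorphic to a component of $H$, then $\sigma$ can swap that shared pair even though $G\not\cong H$ (for instance $G=A+B$ and $H=A$ with $A,B$ non-isomorphic connected asymmetric graphs), and $G+H$ fails to be asymmetric. The complement reduction does not evade this — it merely transports the same obstruction into Proposition 1.3 — so the argument (and the statement) really needs that $G$ and $H$ share no isomorphic connected component, which is automatic when they are connected. I would therefore either add this hypothesis or restrict to connected $G$ and $H$.
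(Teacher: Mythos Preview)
Your primary argument is exactly the paper's: pass to complements via Proposition~1.1, use the identity $\overline{G+H}=\overline{G}\vee\overline{H}$, invoke Proposition~1.3 for the join, and return via Proposition~1.1. You are slightly more careful than the paper in recording that non-isomorphism survives complementation.

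Your closing critique, however, goes beyond the paper and is correct: the proposition as stated is false. With $A,B$ non-isomorphic connected asymmetric graphs, $G=A+B$ and $H=A$ are non-isomorphic asymmetric graphs, yet $G+H=2A+B$ admits the transposition of the two copies of $A$. You are also right that the complement reduction does not dodge this: taking complements yields $\overline{G}=\overline{A}\vee\overline{B}$ and $\overline{H}=\overline{A}$, and $\overline{G}\vee\overline{H}$ has the same nontrivial automorphism, so Proposition~1.3 fails on this pair as well. The paper's proof of Proposition~1.3 tacitly assumes that every automorphism of $G\vee H$ preserves the partition $\{V(G),V(H)\}$, which is exactly what breaks down here. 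Your proposed fix --- restrict to connected $G,H$, or more generally require that they share no isomorphic connected component --- is the right repair for both propositions.
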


\begin{proof}
By Proposition 1,  if $G$ and $H$ are asymmetric then $\overline{G}$ and $\overline{H}$ are asymmetric. Then by Proposition 3, $\overline{G}$ $\vee $ $\overline{H}$ is asymmetric. Since $\overline{G}$ $\vee $ $\overline{H}=\overline{G+H}$, by Proposition 1, $G+H$ is asymmetric. 
\end{proof}
\hspace{5pt}
Other than the trivial case of a single vertex, it was shown by Erd\H{o}s and R\'{e}nyi \cite{Erdos}
that the next smallest asymmetric graph has six vertices. Hence any graph with five or fewer vertices cannot be made asymmetric by removing or adding edges. 


There is only one asymmetric graph on six vertices \cite{Erdos}, only one asymmetric tree on seven vertices \cite{Quintas}, and only one asymmetric tree on eight vertices. In a graph $G$, two vertices $u$ and $v$ can be \textit{transposed} if there exists an automorphism $\sigma :G\rightarrow G$ in which $\sigma (u)=v$ and  $\sigma (v)=u$. In a graph $G$, a vertex is \textit{unique} if its properties are distinct from the properties of all other vertices in G. In other words, a vertex is unique if and only if it is fixed under every automorphism. Note that a graph $G$ is asymmetric, if all of its vertices are unique.

\begin{lemma}Every asymmetric graph on $n\geq 6$ vertices can be extended to an asymmetric graph on $n+1$ vertices by adding a single vertex and a single edge. 
\end{lemma}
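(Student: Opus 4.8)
The plan is to add the new vertex $w$ as a pendant (degree-one) vertex, joined by its single edge to a carefully chosen vertex $v$ of $G$, and to show that for a good choice of $v$ the resulting graph $G'$ is asymmetric. The first step is a reduction: I would show that $G'$ is asymmetric as soon as every automorphism of $G'$ fixes $w$. Indeed, if $\sigma \in \Aut{G'}$ fixes $w$, then it fixes the unique neighbor $v$ of $w$, hence permutes $V(G') \setminus \{w\} = V(G)$; since the edges of $G'$ among these vertices are exactly the edges of $G$, the restriction $\sigma|_{V(G)}$ lies in $\Aut{G}$, which is trivial because $G$ is asymmetric, so $\sigma$ is the identity. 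The whole problem thus reduces to choosing $v$ so that $w$ cannot be moved by any automorphism.

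Before the case analysis I would record two consequences of $G$ being asymmetric. First, attaching $w$ to a vertex $v$ of maximum degree $\Delta$ makes $v$ the unique vertex of degree $\Delta+1$ in $G'$, so every automorphism of $G'$ fixes $v$. Second, no vertex of $G$ can carry two pendant paths (``legs'') of the same length, since transposing two such equal legs while fixing all other vertices would be a nontrivial automorphism of $G$; in particular no vertex of $G$ has two leaf neighbors. With these in hand, the easy case is when some maximum-degree vertex $v$ has no leaf neighbor. Here I attach $w$ directly to $v$; then $v$ is fixed, and among the neighbors of $v$ the vertex $w$ is the unique one of degree one, so any automorphism must send $w$ to $w$, and by the reduction $G'$ is asymmetric.

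The main obstacle is the remaining case, in which every maximum-degree vertex has a leaf neighbor: attaching $w$ directly to such a $v$ would create a twin pair of leaves at $v$ and introduce the transposition swapping them. My plan to defeat this is to attach $w$ at the end of the longest leg. Let $A$ be the greatest length of any pendant leg emanating from a maximum-degree vertex (so $A \geq 1$ in this case), let $v^{*}$ be a maximum-degree vertex carrying a leg of length $A$, and attach $w$ to the leaf at the end of that leg, lengthening it to $A+1$. Since no maximum-degree vertex of $G$ had a leg longer than $A$, after the extension $v^{*}$ is the only maximum-degree vertex with a leg of length $A+1$. Any automorphism of $G'$ therefore sends $v^{*}$ to a maximum-degree vertex carrying a leg of length $A+1$, hence fixes $v^{*}$; because this leg is the unique longest leg at $v^{*}$, the automorphism must fix the whole leg pointwise and in particular fix its far endpoint $w$, and the reduction finishes the argument.

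I expect the delicate points to lie entirely in this last case: verifying that lengthening one leg does not inadvertently create an equally long or longer leg elsewhere, and that the uniqueness of the longest leg is genuinely forced by the asymmetry of $G$ (via the equal-legs transposition). The role of the hypothesis $n \geq 6$ is simply to guarantee that $G$ is a nontrivial asymmetric graph, so that maximum-degree vertices and their legs behave as the argument requires.
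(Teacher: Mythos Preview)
Your argument is correct and follows essentially the same two-case strategy as the paper: attach the new leaf to a maximum-degree vertex when that creates no twin leaves, and otherwise extend a longest pendant path. Your execution is in fact cleaner than the paper's: the reduction ``it suffices that every automorphism of $G'$ fixes $w$'' replaces the paper's (slightly imprecise) appeal to transpositions, and your case split (some maximum-degree vertex has no leaf neighbor vs.\ all do) together with the equal-legs observation makes the pendant case more transparent than the paper's ``greatest distance to a vertex of degree $\geq 3$'' formulation.
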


\begin{proof}

Let $G$ be an asymmetric graph without a pendant vertex. Let $G^{\prime }$ be a graph obtained by adding a new vertex $u$ and an edge $uv$, where $v$ is a vertex of maximum degree in $G$. We claim that $G^{\prime }$ is asymmetric. If $G^{\prime }$ is not asymmetric then there exists an automorphism $f$ where two vertices in $G^{\prime }$ can be transposed. We note that any automorphism of $V(G^{\prime })$ must send $v$ to itself since it is the only vertex of degree $\Delta (G)+1$ and $f$ must send $u$ to itself since it is the only vertex of degree $1$. Let $v_{i}$ and $v_{j}$ be two vertices that can be transposed by the automorphism $f$. Since removing the vertex $u$ will impact $v_{i}$ and $v_{j}$ in exactly the same way, then there exists an automorphism of $V(G)$ in which $v_{i}$ and $v_{j}$ could be switched. This would contradict the fact that $G$ is asymmetric.
	
If $G$ has a vertex of degree one, then we choose a vertex $u$ with degree one that has a greatest distance $d$ from a vertex of degree greater than or equal to $3$. Then we can create a new graph $G^{\ast }$ where a vertex $z$ and edge $uz$ are added to $G$. We next show that $G^{\ast }$ is asymmetric. Since $G$ is asymmetric, each in $G-u$ has a property that each of the other vertices does not. The vertex $u$ is the only vertex in $G^{\ast }$ that has degree $2$ and is adjacent to $z$ which has the greatest distance $d+1$ to a vertex of degree of $3$ or more. Hence $u$ and $z$ will both be unique in $G^{\ast }$, making $G^{\ast }$ asymmetric.
\end{proof}

We next present an elementary theorem showing when $ai(G)$ is defined.

\begin{theorem}
The asymmetric index is well-defined for any graph $G$ consisting of a single vertex or having six or more vertices. 
\end{theorem}

\begin{proof} 
Let $G$ be a graph consisting of a single vertex or having six or more vertices. We can simply remove all edges from $G$ and add edges using the construction in the proof of Lemma 5 to create an asymmetric graph on $n$ vertices.
\end{proof}

In the next theorem we show that there exist graphs where the asymmetric index is arbitrarily large.

\begin{theorem}
For any positive integer $k$, there exists a graph $G$ where $ai(G)=k$.
\end{theorem}

\begin{proof}	
We first begin with small cases. Let $T_{7}$ be the asymmetric tree with $7$ vertices. Starting with $8K_{1}$ and adding the six edges of $T_{7}$ shows that $ai(8K_{1})=6$. Successively removing pendant edges will create graphs with asymmetric index $i$ for $2\leq i\leq 5$. It is clear that to obtain asymmetric graphs in each of these cases edges must only be added and adding any smaller positive number of edges will result in a graph with at most five edges which cannot be asymmetric.
	
Starting with $tK_{1}$ where $9\leq t\leq 15$ and first adding the edges of $T_{7}$ and then extending the longest path incident to the vertex of degree $3$ will create graphs with asymmetric index $i$ for $7\leq i\leq 13$. We note that in each of these cases the number of edges in the resulting tree equals the asymmetric index. Starting with $tK_{1}$ and adding fewer than $t-2$ edges will either leave at least two isolated vertices or a component with between $1$ and $5$ edges which cannot be asymmetric.	
It is tempting to continue in this manner, starting with $16K_{1}$ and adding the edges of $T_{7}$ and then extend the longest path incident to a vertex of degree $3$ by eight edges. This will create a graph which shows $ai(16K_{1})\leq 14$. However, it is possible to improve this bound by using the disjoint union of two different non-trivial asymmetric trees. If we start with $16K_{1}$ and add the edges of the asymmetric tree with $7$ vertices, and the edges of the asymmetric tree with $8$ vertices, and leave one isolated vertex, we have an asymmetric graph with $13$ edges (see the figure below). Since the trees have a total of $13$ edges, we have that $ai(15K_{1})\leq 13$. To show equality, we note that $12$ or fewer edges will either be a graph with at least two isolated vertices or a component that has between $1$ and $5$ edges, which cannot be asymmetric.
\begin{figure}[h!]
\center
\includegraphics[scale=0.5]{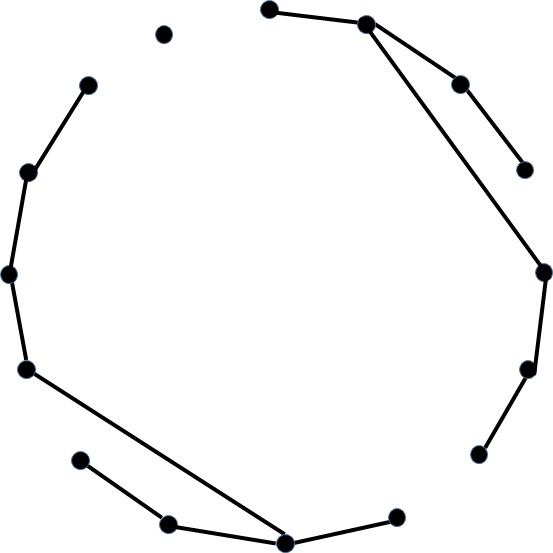}
\caption{An asymmetric graph with 16 vertices}
\end{figure}

To create graphs $G$ with $ai(G)\geq 14$, we use the disjoint union of non-isomorphic asymmetric trees. We first note that $16K_{1}$ is the smallest graph for which we can add the edges of two non-trivial non-isomorphic asymmetric trees and result in an asymmetric graph. Since there is a unique asymmetric tree on $7$ vertices and a unique asymmetric tree on $8$ vertices, the smallest graph for which we can add the edges of three non-trivial non-isomorphic asymmetric trees will contain asymmetric trees on $7$, $8$, and $9$ vertices along with an isolated vertex. This graph will have $25$ vertices and contain $21$ edges. Hence $ai(25K_{1})=21$. Above we showed that $ai(16K_{1})=13$, where we added the edges of $T_{7}$ and $T_{8}$. To create a graphs with asymmetric indices between $14$ and $20$, we start with $tK_{1}$ where $17\leq t\leq 23$ respectively. Then we successively add edges to extend the longest path in $T_{8}$ incident to the vertex of degree $3$ to create trees with $j$ vertices where $9\leq j\leq 15$. In each of these graphs the largest tree will be asymmetric by Proposition 1.

We will use $T_{r}$ to denote a tree with $r$ vertices. Consider the set of non-isomorphic asymmetric trees $T_{a_{1}},T_{a_{2}}, \ldots ,T_{a_{k}}$ where $a_{1}\leq a_{2}\leq \ldots \leq a_{n}$. In all cases $a_{1}=1$ and $a_{2}\geq 6$. There must exist a smallest integer $n_{1}$ such that the edges of the trees $T_{a_{1}},T_{a_{2}},\ldots,T_{a_{k}}$ can be added to $n_{1}K_{1}$ so that the resulting graph $G_{1}$ is asymmetric. This graph will have $\sum_{i=1}^{k} a_{i}$ vertices and $(\sum_{i=1}^{k} a_{i})-k$ edges and hence $ai(G_{1})\leq n_{1}-k$. \ There must also exist a smallest integer $n_{2}$ such that the edges of the trees $T_{a_{1}},T_{a_{2}},\ldots,T_{a_{k}},T_{a_{k+1}}$ can be added to $n_{2}K_{1}$ so that the resulting graph $G_{2}$ is asymmetric. This graph will have $\sum_{i=1}^{k+1} a_{i}$ vertices and $\sum_{i=1}^{k+1} a_{i})-(k+1)$ edges and hence $ai(G_{2})\leq n_{2}-\left( k+1\right) $. We note that in both cases any smaller set of edges will result in a graph with more than one isolated vertex or a tree with fewer than or equal to five edges. Hence $ai(G_{1})=n_{1}-k$ and $ai(G_{2})=n_{2}-\left( k+1\right) $. Next we can construct graphs with asymmetric index $j$ for all $n_{1}-k<j<n_{2}-\left( k+1\right) $ by appending a path with $q$ edges for $1\leq q\leq \left( a_{k+1}\right) -2$, to the longest path incident to a vertex of degree $3$ in $G_{1}$. This completes the proof.
\end{proof}
	
We note that we can also create connected graphs with $ai(G)=k$ for any non-negative integer $k$. For $1\leq k\leq 6$ we use a similar construction to the one described above starting with $K_{7}$ and removing edges of $G_{i}$ $1\leq i\leq 6$. For cases when $k\geq 7$ we start with $K_{n}$ where $n\geq 7$ and remove the edges shown in Figure 2.


\section{Calculating the asymmetric index of a graph}
In this section we investigate the asymmetric index of several families of graphs. We begin with the family of paths. 
\subsection{Paths}
We consider paths, $P_{n}$ where $n\geq 6$. Removing any number of edges will leave a non-asymmetric graph, in the form of shorter paths. We will show that the addition of a single edge can make the resulting graph asymmetric.
\begin{theorem}
For $n \geq 6, \; ai(P_n) = 1$. 
\end{theorem}
\begin{proof}
Consider a path on $n \geq 6$ vertices with consecutive labels $v_1, v_2, \ldots, v_n$. Adding the edge $v_2 v_4$ will produce a cycle with two pendant paths of different lengths, which is asymmetric. This implies that $ai(P_n) = 1$.
\end{proof}
An example of this fact can be seen in Figure 2.
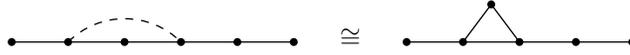
\begin{figure}[h!]
\begin{center}
\begin{tikzpicture}[node distance = 1cm, line width = 0.5pt]
\coordinate (1) at (0,0);
\coordinate (2) at (0.75,0);
\coordinate (3) at (1.5,0);
\coordinate (4) at (2.25,0);
\coordinate (5) at (3,0);
\coordinate (6) at (3.75,0);
\coordinate (7) at (4.5,-0.2);
\node (8) [above = 0.1mm of 7] {$\cong$};
\coordinate (9) at (5.25,0);
\coordinate (10) at (6,0);
\coordinate (11) at (6.375,0.5);
\coordinate (12) at (6.75,0);
\coordinate (13) at (7.5,0);
\coordinate (14) at (8.25,0);

\draw \foreach \x [remember=\x as \lastx (initially 1)] in {2,3,4,5,6}{(\lastx) -- (\x)};
\draw[dashed] (2) to [out=45, in=135] (4);
\draw (9)--(10);
\draw (10)--(11);
\draw (11)--(12);
\draw (12)--(13);
\draw (13)--(14);
\draw (10)--(12);

\foreach \point in {1,2,3,4,5,6,9,10,11,12,13,14} \fill (\point) circle (1.5pt);
\end{tikzpicture}
\end{center}
\caption{Adding a single edge to a path on six vertices}
\end{figure}

We next investigate the different possibilities for a single edge to be added to a path to make the resulting graph asymmetric.

\begin{theorem}
The number of asymmetric graphs obtained by adding an edge to a path $P_{n}$ is
$\left\lfloor \frac{\left( n-4\right) ^{2}}{4}\right\rfloor $.
\end{theorem}

\begin{proof}Let the vertices of $P_{n}$ be $v_{0},v_{1},...,v_{n-1}$. We are interested in adding a single edge to a path so that the resulting graph is a cycle with two pendant paths of different length. This graph is asymmetric since each vertex will have a distinct pair of distances to the vertices of degree $1$.

We first note that adding an edge incident to either $v_{0}$ or $v_{n-1}$ will result in a graph that is not asymmetric. To see this note there will exist a non-trivial automorphism that transposes the two vertices of degree 2 that are adjacent to the vertex of degree 3. For this reason we begin at index 1. Then we note that adding an edge between vertex index $v_i$ and vertex $v_{(n-1)-i}$. Therefore, any vertex $v_i$ cannot connect two at three other vertices  $v_{0}$, $v_{n-1}$, and the vertex $v_{(n-1)-i}$. However, for every one more index we advance farther along the path, we add two more points that cannot be connected to because of the resulting symmetry. For this reason, every vertex on the path has $v-3-2i$ possibilities. Summing over all of the vertex possibilities, we get the total number of possible ways of making the path asymmetric. We only sum up to $\left\lfloor\frac{v}{2} \right\rfloor-2$ because after the halfway point, everything will have been accounted for and thus repeated. Hence the number of possibilities of making this path asymmetric by adding one edge is enumerated by the following formula.
\[\sum\limits_{i=1}^{\left\lfloor\frac{v}{2} \right\rfloor-2}v-3-2i\]

Next we will show that \[\sum\limits_{i=1}^{\left\lfloor\frac{v}{2} \right\rfloor-2}v-3-2i=\left\lfloor \frac{\left( n-4\right) ^{2}}{4}\right\rfloor \]

We first note that when $n$ is odd,
\medskip	

$\sum\limits_{i=1}^{\left\lfloor \frac{n}{2}\right\rfloor -2}\left( n-3-2i\right) $
\medskip	
$=\sum\limits_{i=1}^{\frac{n-5}{2}}\left( n-3-2i\right) \medskip $
\medskip	
$=\allowbreak \frac{1}{4}n^{2}-2n+\frac{15}{4}\medskip $
\medskip	
$=\left\lfloor \frac{\left( n-4\right) ^{2}}{4}\right\rfloor $

When $n$ is even, 
	
\medskip $\sum\limits_{i=1}^{\frac{n}{2}-2}\left( n-3-2i\right) $
\medskip
$=\allowbreak \frac{1}{4}n^{2}-2n+4$\medskip 
\medskip
$=\left\lfloor \frac{\left( n-4\right) ^{2}}{4}\right\rfloor $.

\end{proof}

\subsection{Cycles}

A cycle, $C_n$, is both vertex and edge transitive. Here $\Aut{C_n}$ is far from trivial, in fact it is isomorphic to the dihedral group $D_n$. We first note that $ai(C_n)>1$, since deletion of a single edge will result in a path, which is non-asymmetric and adding a single edge will result in a graph with a reflective line of symmetry that bisects the added edge. 
	
We will show next that for $n \geq 6, \; ai(C_n) = 2$. We consider adding two edges to $C_n$ where $n \geq 6$.
The following three theorems give necessary and sufficient conditions for two edges to be added to a cycle to make the resulting graph asymmetric. There are three ways to add a pair of edges to a cycle to create an asymmetric graph:\ (i) Adding two non-crossing edges that are incident, (ii) two non-crossing edges that are not incident, and (iii) two crossing edges. These three cases are considered in the following three theorems.
	
A vertex $v$ is called distinct if $v$ is fixed under every automorphism of $G$.

\begin{theorem} Let $G$ be the graph where two non-crossing edges are added the cycle $C_{n}$ so that the resulting graph has three chordless cycles $C_{k}$, $C_{m}$, and $C_{l}$, where $C_{m}$ is the subgraph sharing edges with both $C_{k}$ and $C_{l}$ and $k+m+l=n+4$. Then the resulting graph is asymmetric if and only if $k\neq l$.
\end{theorem}

\begin{figure}[h!]
\center
\includegraphics[scale=0.75]{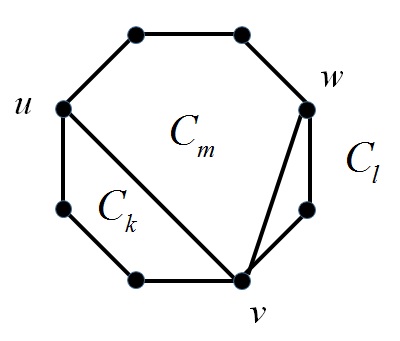}
\end{figure}

\begin{proof}
Assume that $k\neq l$. Let $v$ be the vertex that is incident to both chords, let $u$ be the vertex that is part of both $C_{k}$ and $C_{m}$, and let $w$ be the vertex that part of both $C_{l}$ and $C_{m}$. The vertices $u$,$v$, and $w$ are distinct since they are they only vertices contained in two of the three cycles $C_{k}$, $C_{m}$, and $C_{l}$. Each vertex $x$ in $C_{k}$ is distinct from every vertex in $G$ since the pairs $\left( d(x,u),d(x,v)\right) $ are different for each vertex $x$. Each vertex $y$ in $C_{l}$ is distinct from every vertex in $G$ since the pairs $\left( d(y,u),d(y,w)\right) $ are different for each vertex $y$. Each vertex $z$ in $C_{l}$ is distinct from every vertex in $G$ since the pairs $\left( d(z,v),d(z,w)\right) $ are different for each vertex $y$. For the converse note that if $k=l$ then there is an axis of symmetry that passes through the vertex of degree $4$ and the middle of the arc opposite this vertex. Hence $G$ is non-asymmetric.
\end{proof}

In the following two theorems for vertices $x$ and $y$ on the cycle $C_{n}$ we use $l(x-y)$ to denote the number of edges on the minor arc of the cycle $C_{n}$ between $x$ and $y$.

\begin{theorem}
Let $G$ be the graph where two non-crossing, non-incident edges are added to the cycle $C_{n}$ and the resulting graph has three chordless $C_{k}$, $C_{m}$, and $C_{l}$ , where $C_{m}$ is the subgraph between $C_{k}$ and $C_{l}$ and $k+m+l=n+4$.  Let $t$ and $w$ be the vertices in both $C_{k}$ and $C_{m}$ and let $u$ and $v$ be the vertices in both $C_{m}$ and $C_{l}$.  Then the resulting graph is asymmetric if and only if $k\neq l$ and $l\left( v-w\right) \neq $ $l\left( t-u\right) $. 
\end{theorem}

\begin{figure}[h!]
\center
\includegraphics[scale=0.75]{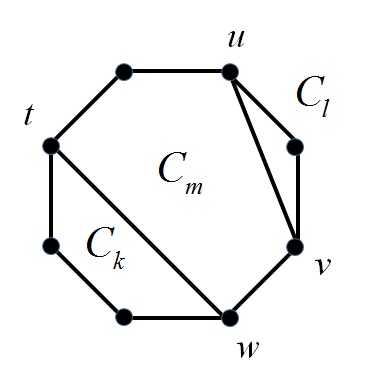}
\end{figure}

\begin{proof}
First assume that $k\neq l$ and $l\left( v-w\right) \neq $ $l\left( t-u\right) $. We will proceed to show that every vertex in $G$ is distinct. We first note that vertices $t$ and $w$ are the only vertices of degree $3$ that are contained in both $C_{k}$ and $C_{m}$. Since $l\left( v-w\right) \neq $ $l\left( t-u\right) $, $t$ and $w$ have different distances to a vertex in $C_{l}$. Hence $t$ and $w$ are distinct. Next note that vertices $u$ and $v$ are the only vertices of degree $3$ that are contained in both $C_{m}$ and $C_{l}$. Since $l\left( v-w\right) \neq $ $l\left( t-u\right) $, $u$ and $v$ have different distances to a vertex in $C_{k}$. Hence $u$ and $v$ are distinct. Then all of the other vertices are distinct since no two vertices on any of the arcs $t-u$, $u-v$, $v-w$, and $w-t$ have the same pair of distances to the end vertices of the arcs they are on. For the reverse direction first note that if $k=l$ then there is an axis of symmetry that passes through the middle of the arc $u-w$ and the middle of the arc $w-t$. If $k\neq l$ and  $l\left( v-w\right) =$ $l\left( t-u\right) $ then $G$ has an axis of symmetry that passes through the middle of the arc $v-w$ and the middle of the arc $t-u$. Hence $G$ is asymmetric.
\end{proof}

\begin{theorem}
Let $G$ be the graph with two crossing chords so that the resulting graph has three chordless $C_{k}$, $C_{m}$, and $C_{l}$ , where $C_{m}$ is the subgraph that share edges with both $C_{k}$ and $C_{l}$ and $k+m+l=n+4$. Furthermore the two edges have vertices $u$ and $v$, and $w$ and $x$, and the resulting graph has four arcs along the cycle, $u-w$, $w-v$, $v-x$, and $x-u$. Then the resulting graph is asymmetric if and only if:
	
(i) $l(v-x)\neq $ $l(x-u)$ or  $l(w-v)\neq $ $l(u-w)$, and
	
(ii) $l\left( u-w\right) \neq l\left( v-x\right) $ and $l\left( w-v\right) \neq l\left( x-u\right) $.
\end{theorem}

\begin{figure}[h!]
\center
\includegraphics[scale=0.75]{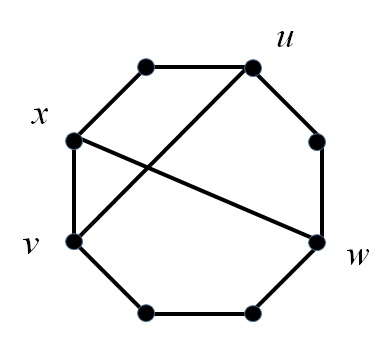}
\end{figure}

\begin{proof}
We first show that if (i) and (ii) both hold the graph is asymmetric. We will show that each vertex is distinct. If (i) holds the graph cannot have any reflectional symmetry about the chord from $x$ to $w$.  Then since $l(u-w)\neq l(v-x)$ and $l(w-v)\neq l(x-u)$ we have that $u,v,w$, and $x$ are all distinct. Then all of the other vertices are distinct since no two vertices on the same arc have the same pair of distances to the two end vertices on the arcs. Hence $G$ is asymmetric.

We next consider if either (i) or (ii) does not hold. If (i) does not hold then $l(v-x)=$ $l(x-u)$ and $l(w-v)=$ $l(u-w)$ then the graph has a line of reflection and is therefore non-asymmetric. Next we consider if (ii) does not hold. If $l\left( u-w\right) =l\left( v-x\right) $, then the graph as an axis of symmetry through the middle of the arcs $x-u$ and $v-w$. The other case involving arcs $w-v$ and $x-u$ is similar.
\end{proof}

Next we will investigate the asymmetric index of wheel graphs.
\begin{theorem}
For $n \geq 6, \; ai(W_n) = 2$. 
\end{theorem}
\begin{proof}
We will first show that $ai(W_n) > 1$. If we remove an edge incident to the vertex of degree $n-1$, this results in a vertex of degree two and the neighbors of this vertex are not unique. Similarly, if we remove an edge whose end points are both degree three, we are left with two vertices of degree two that are each not unique. Now if an edge is added to the graph, it must be added between two vertices of degree three. These vertices now have degree four and are each not unique. It has now been established that $ai(W_n)$ is at least two. 

Now consider the case when a single edge was removed, resulting in two vertices of degree two. If we remove an edge that is incident to one of these vertices of degree two, as well as incident to the vertex of degree $n-1$, then the resulting graph is asymmetric (see Figure 3). 
\end{proof}
\begin{figure}[h!]
\begin{center}
\begin{tikzpicture}[node distance = 0.1cm, line width = 0.5pt]
\coordinate (1) at ({1.5*cos(0)},{1.5*sin(0)});
\coordinate (2) at ({1.5*cos(60)},{1.5*sin(60)});
\coordinate (3) at ({1.5*cos(120)},{1.5*sin(120)});
\coordinate (4) at ({1.5*cos(180)},{1.5*sin(180)});
\coordinate (5) at ({1.5*cos(240)},{1.5*sin(240)});
\coordinate (6) at ({1.5*cos(300)},{1.5*sin(300)});
\coordinate (7) at (0,0);
\coordinate (8) at (2.5,0);
\coordinate (9) at (3,0);

\coordinate (10) at ({5.5+(1.5*cos(0))},{1.5*sin(0)});
\coordinate (11) at ({5.5+(1.5*cos(60))},{1.5*sin(60)});
\coordinate (12) at ({5.5+(1.5*cos(120))},{1.5*sin(120)});
\coordinate (13) at ({5.5+(1.5*cos(180))},{1.5*sin(180)});
\coordinate (14) at ({5.5+(1.5*cos(240))},{1.5*sin(240)});
\coordinate (15) at ({5.5+(1.5*cos(300))},{1.5*sin(300)});
\coordinate (16) at (5.5,0);
\coordinate (24) at (8,0);
\coordinate (25) at (8.5,0);

\coordinate (17) at ({11+1.5*cos(0)},{1.5*sin(0)});
\coordinate (18) at ({11+1.5*cos(60)},{1.5*sin(60)});
\coordinate (19) at ({11+1.5*cos(120)},{1.5*sin(120)});
\coordinate (20) at ({11+1.5*cos(180)},{1.5*sin(180)});
\coordinate (21) at ({11+1.5*cos(240)},{1.5*sin(240)});
\coordinate (22) at ({11+1.5*cos(300)},{1.5*sin(300)});
\coordinate (23) at (11,0);

\draw (1)--(7);
\draw (2)--(7);
\draw (3)--(7);
\draw (4)--(7);
\draw (5)--(7);
\draw (6)--(7);
\draw[->] (8)--(9);

\draw (16)--(11);
\draw[dashed] (10)--(11);
\draw[dashed] (16)--(10);
\draw (10)--(15);
\draw (11)--(12);
\draw (12)--(13);
\draw (13)--(14);
\draw (16)--(15);
\draw (16)--(12);
\draw (16)--(13);
\draw (16)--(14);
\draw (15)--(14);
\draw[->] (24)--(25);

\draw (23)--(18);
\draw (23)--(19);
\draw (23)--(20);
\draw (23)--(21);
\draw (23)--(22);
\draw (18)--(19);
\draw (19)--(20);
\draw (20)--(21);
\draw (21)--(22);
\draw (22)--(17);

\draw \foreach \x [remember=\x as \lastx (initially 1)] in {2,3,4,5,6,1}{(\lastx) -- (\x)};

\foreach \point in {1,2,3,4,5,6,7,10,11,12,13,14,15,16,17,18,19,20,21,22,23} \fill (\point) circle (1.5pt);
\end{tikzpicture}
\end{center}
\caption{Changing a wheel into an asymmetric graph }
\end{figure}
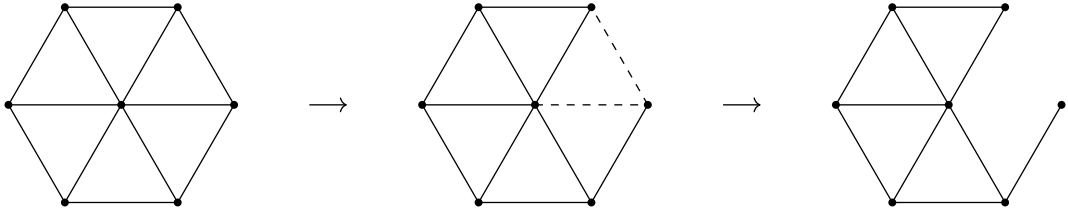

\subsection{Circulant Graphs}

A circulant graph $C_{n}(L)$ is a graph on vertices $v_{1},v_{2},...,v_{n}$ where each $v_{i}$ is adjacent to $v_{i+j \pmod n}$ and $v_{i-j \pmod n}$ for each $j$ in a list $L$.

\begin{theorem} 
Let $G=$ $C_{n}(L)$ where $L$ is a set of $\left\lfloor \frac{n-4}{2}\right\rfloor$ integers between $1$ and $\left\lfloor \frac{n}{2}\right\rfloor$, inclusive. Then $ai\left( G\right)=2$. 
\end{theorem}

\begin{proof}

Let $G=$ $C_{n}(L)$ where $L$ is a set of $\left\lfloor \frac{n-4}{2}\right\rfloor$ integers between $1$ and $\left\lfloor \frac{n}{2}\right\rfloor$, inclusive. The automorphism group of vertices in $C_{n}(L)$ is the dihedral group with $n$ elements (which are rotations or reflections). Since $\left\lfloor \frac{n}{2}\right\rfloor -\left\lfloor \frac{n-4}{2}\right\rfloor \geq 2$ all vertices have degree less than or equal to $n-3$. Adding two edges $xy$ and $xz$ where $y$ and $z$ have different distances from $x$ along the outer cycle will create a graph with no rotations or reflections, which will be asymmetric.
\end{proof}

\begin{figure}[h!]
\center
\includegraphics[scale=0.4]{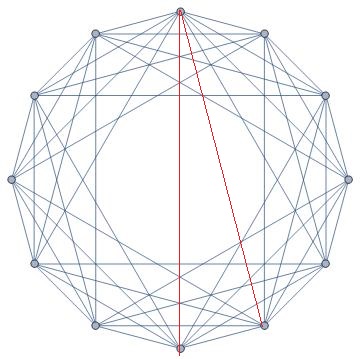}
\caption{An asymmetric graph created by adding two edges to a circulant graph}
\end{figure}

We note that in this theorem the size of $L$ cannot be increased. Suppose $L>\left\lfloor \frac{n-4}{2}\right\rfloor $. If $n$ is odd then the complement of $C_{n}(L)$ will be a cycle and the removal of any two edges will result in a disjoint union of paths, which is not asymmetric. Hence the addition of any two edges to $C_{n}(L)$ will result in a graph that is not asymmetric. If $n$ is even then the complement of $C_{n}(L)$ will be a matching cycle and the removal of any two edges will result in a graph that is not asymmetric. Hence the addition of any two edges to $C_{n}(L)$ will result in a graph that is not asymmetric.
\bigskip
\bigskip
\bigskip	
\bigskip
\begin{theorem}
 Let $n\geq 4$ and let $ML_{2n}$ be the M\"{o}bius Ladder graph with $2n$ vertices. Then $ai\left( ML_{2n}\right) =2$.   
\end{theorem}

\begin{proof}
We note that the complement of $C_{2n}(L)$ where $L$ has $n-2$ elements is the M\"{o}bius Ladder graph with $2n$ vertices. Hence $ai\left( ML_{2n}\right) =ai\left( \overline{C_{2n}(L)}\right) =ai\left( C_{2n}(L)\right) =2$.
\end{proof}

\begin{figure}[h!]
\center
\includegraphics[scale=0.5]{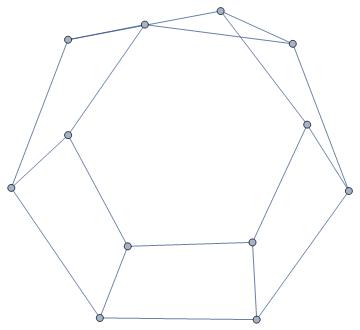}
\caption{M\"{o}bius Ladder graph}
\end{figure}
\subsection{Subdivided stars}

A star with $t$ edges is the complete bipartite graph $K_{1,t}$. As the case where $t=2$ is a path, we will consider $t\geq 3$. A subdivided star $S\left( k_{1},k_{2},...,k_{t}\right) $ is constructed by replacing each of the $t$ edges in $K_{1,t}$ by paths of length $k_{1},k_{2},...,k_{t}$ where each $k_{i}\geq 1$. The vertex to which all pendant paths are incident in the subdivided star is referred to as the \textit{origin vertex}.

\begin{theorem}
A subdivided star $S(k_{1},k_{2},,...,k_{t})$ is asymmetric if and only if $\sum\limits_{i=1}^{t}k_{t}\geq 6$ and $k_{i} \neq k_{j}$ for all $i,j, i \neq j$.
\end{theorem}
\begin{proof}
Let $G=S(k_{1},k_{2},,...,k_{t})$ be a subdivided star. If $G$ fewer than six vertices is is non-asymmetric. If $k_{i} \neq k_{j}$ for every $i,j$ then each vertex is distinct since it can be uniquely characterized by the length of the pendant path containing it and its distance to the central vertex. If there exists $i$ and $j$ where $k_{i}=k_{j}$ then there exists a non-trivial automorphism of $G$ that transposes the vertices at the ends of the pendant paths of length $k_{i}$.

\end{proof}
In Lemma 17 and Theorems 18 and 19 we give necessary and sufficient conditions for a subdivided star to have an asymmetric index of $1$. In Theorem 18 we consider the case of adding an edge and in Theorem 19 we consider the case of removing an edge.

\begin{lemma}
Let $G$ consist of a subdivided star where there are three or fewer pendant paths of the same length incident to a vertex $v$ and an edge $uw$ where $u$ and $w$ are vertices on different pendant paths of the same length and $u$ is adjacent to the origin vertex and $w$ has degree $1$. Then $G$ is asymmetric.
\end{lemma}
\begin{figure}[h!]
\center
\includegraphics[scale=0.75]{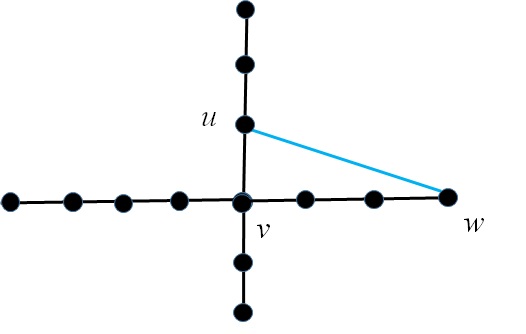}
\caption{A subdivided star with an added edge}
\end{figure}
\begin{proof}
It may be helpful to refer to Figure 6. We will show that each vertex in $G$ is distinct. Clearly $v$ is distinct. The vertex $u$ is distinct since it is the only vertex (other than $v$) that has degree at least $3$. The vertex $w$ is distinct since it is the only vertex of degree $2$ that is part of a cycle and is also adjacent to $u$. All of the other vertices in the cycle have a distinct distance from $w$. The other vertices on the same pendant path as $u$ have distinct distances from $u$. All of the remaining vertices in the graph have a distinct distance from $v$. Hence $G$ is asymmetric.
\end{proof}

By Lemma 17, adding a single edge can make the vertices on two pendant paths of the same length distinct, can make the vertices on three pendant paths of the same length distinct, or can make the vertices on two pendant paths of length $m_{1}$ and two pendant paths of length $m_{2}$ all distinct.

\begin{theorem}
   A subdivided star $G=S(k_{1},k_{2},...,k_{t}) 
   $ where $\sum\limits_{i=1}^{t}k_{i}\geq 7$ can be made asymmetric by adding a single edge if and only if the number of different path lengths is either $t-1$ or $t-2$.
\end{theorem}

\begin{proof}
 If the number of different path lengths is $t-1$ then we have a single pair of pendant paths of the same length $m$, and all of the remaining pendant paths have different lengths that are not equal to $m$. We then add an edge between the two paths between one vertex that is adjacent to the origin vertex and a vertex on the other path that has a degree of $1$. Then since all of the other pendant paths have different lengths, it follows by Lemma 17 that the graph is asymmetric.  If the number of different path lengths is $t-2$ then we have a single triple of pendant paths of the same length $m$, and all of the remaining pendant paths have distinct lengths not equal to $m$, or we have two pairs of pendant paths, $P_{1}$ and $P_{2}$ with length, $m_{1}$ and $Q_{1}$ and $Q_{2}$ with length, $m_{2}$, and all of the remaining pendant paths have distinct lengths not equal to $m_{1}$ or $m_{2}$. We begin with the first case.  If $G$ has three pendant paths of length $m$, let $P$ and $Q$ be two of the pendant paths of length $m$. Let $u$ be the vertex on $P$ that is adjacent to the origin vertex. Let $w$ be the vertex on $Q$ that has degree $1$. Adding the edge $uw$ creates a cycle with $m+2$ vertices. Since all of the remaining pendant paths have different lengths that are not equal to $m$, it follows by Lemma 17 that the graph is asymmetric. For the second case let $u$ be the vertex on $P_{1}$ that is adjacent to the origin vertex. Let $w$ be the vertex on $Q_{1}$ that has degree $1$. Adding the edge $uw$ creates a cycle with $m_{2}+2$ vertices. Then since all of the pendant paths have different lengths, it follows by Lemma 17 that the graph is asymmetric. If the number of distinct lengths of pendant paths is $t$, then the graph is asymmetric by Theorem 16 and hence does not have an asymmetric index of $1$. If the number of distinct lengths is less than or equal to $t-3$, we are guaranteed to have at least five different path lengths distributed among two different pendant paths, or six different path lengths distributed among three different pendant paths. The addition of a single edge will leave two pendant paths with the same length making the graph non-asymmetric. 
\end{proof}

We next consider non-asymmetric subdivided stars that can be made asymmetric by removing a single edge. We first note that removing any edge that is not incident to a vertex of degree $1$ will create a subgraph that is a path on at least two vertices. The resulting graph will be non-asymmetric as there exists an automorphism which transposes the endpoints of the path. Hence the only edges we will consider removing will be incident to a vertex of degree $1$.
	
\begin{theorem}

A non-asymmetric subdivided star $S\left( k_{1},k_{2},...,k_{t}\right) $ where $t\geq 3$ and $\sum\limits_{i=1}^{t}k_{i}\geq 7$ can be made asymmetric by removing an edge if and only if either:
	
\begin{enumerate}[(a)] 
\item $t\geq 4$, $k_{i}=k_{j}=1$ for a pair $i$ and $j$ and $k_{r}\neq k_{s}$ for all $\left\{ r,s\right\} \neq \left\{ i,j\right\} $
	
\item $k_{i}=k_{j}\geq 2$ for a pair $i$ and $j$ and $k_{r}\neq k_{s}$ for all  for all $\left\{ r,s\right\} \neq \left\{ i,j\right\} $, and there does not exist $k_{h}=k_{i}-1$. \end{enumerate}
\end{theorem}
\begin{proof}

First we consider when (a) holds. Assume that $k_{i}=k_{j}=1$ for a pair $i$ and $j$ and $k_{r}\neq k_{s}$ for all other $1\leq r,s\leq t$. Then removal of an edge incident to a vertex of degree $1$ results in a graph that is the disjoint union of a subdivided star where all of the pendant paths have different lengths and an isolated vertex. By Proposition 4 and Theorem 18 this graph is asymmetric.
	
Next we consider when (b) holds. Let $G=S\left( k_{1},k_{2},...,k_{t}\right) $ and there exists a pendant paths with $k_{i}=k_{j}\geq 2$, $k_{r}\neq k_{s}$ for all other $1\leq r,s\leq t$, and no pendant path of length $k_{i}-1$. Then we can remove an edge from either $P_{k_{i}}$ or $P_{k_{j}}$ that is incident to a vertex of degree $1$ and create a graph that is the disjoint union of a subdivided star where all of the pendant paths have different lengths and an isolated vertex. By Proposition 4 and Theorem 16 this graph is asymmetric.
	
For the other direction we consider four cases. 

\begin{enumerate}
    
\item If $t=3$ and $k_{i}=k_{j}=1$ for a pair $i$ and $j$ then removing an edge incident to a vertex of degree $1$ will result in a graph that is the disjoint union of a path and an isolated vertex, which is non asymmetric.
	
\item If $G$ has a single pair pendant paths of of the same length $m\geq 2$ and there is a pendant path of length $m-1$. Removing an edge incident to a vertex of degree $1$ on one of the pendant paths will leave two pendant paths with length $m-1$ so the resulting graph is not asymmetric.
	
\item If $G$ has three pendant paths of of the same length. Removing an edge will still leave two pendant paths of the same length, so the graph is not asymmetric.
	
\item If $G$ has a pair of pendant paths of length $m_{1}$ and a second pair of pendant paths of length $m_{2}$. Then removing one edge will still leave two pendant paths of the same length making the resulting graph non-asymmetric.
\end{enumerate}
\end{proof}




\subsection{Complete graphs}

We next investigate complete graphs $K_{n}$ (and their complements $nK_{1}$ known as discrete graphs) and show they have higher asymmetric indices than other graphs we have encountered. This is expected as the automorphism group of $K_{n}$ is $S_{n}$ and the automorphism group of an asymmetric graph is trivial. We note that $ai\left( K_{1}\right) =0$, and $ai\left( K_{n}\right) $ is not defined when $2\leq n\leq 5$. To determine $ai\left( K_{6}\right)$ we start with six isolated vertices and add the edges of the asymmetric graph on six vertices (shown in Figure 2). To determine $ai\left( K_{7}\right) $ we start with seven isolated vertices and add the edges of the asymmetric graph on six vertices. In both cases using any fewer edges results in a graph that is not asymmetric. Hence $ai\left( 6K_{1}\right) =6$ and $ai\left( 7K_{1}\right) =6$. By Proposition 1, $ai\left( K_{6}\right) =6$ and $ai\left( K_{7}\right) =6$. We have shown in the proof of Theorem 7 when $8\leq n\leq 15$, $ai\left( K_{n}\right) =n-2$. 
It is not too difficult to establish an upper bound for $ai(K_{n})$.  There exists an asymmetric tree $H$ with seven vertices and six edges. By Proposition 4 and Lemma 5 $H$ can be extended to an asymmetric graph $H_{n}$ consisting of a tree with $n-1$ vertices along with an isolated vertex. Then by Proposition 1, $K_{n}-H_{n}$ will be asymmetric. We have shown that for $n\geq 8$,  $ai(K_{n})\leq n-2$. 
For larger cases this bound can be improved, as the exist a larger number of non-isomorphic trees with a total of $n$ vertices. As mentioned in the proof of Theorem 7, $ai\left( K_{16}\right) =13$ since there exist three distinct asymmetric trees with a total of $16$ vertices.

We continue with a larger example. Consider $ai(K_{43})$. It is known that there is a single asymmetric trees with 7 vertices and a single asymmetric tree with 8 vertices and there are three non-isomorphic asymmetric trees on $9$ vertices. By starting with 43 isolated vertices we can add the edges necessary to construct each of these trees. This graph has five non-trivial trees and an isolated vertex, and a total of 37 edges (see Figure 7).

\begin{figure}[h!]
\center
\includegraphics[scale=0.5]{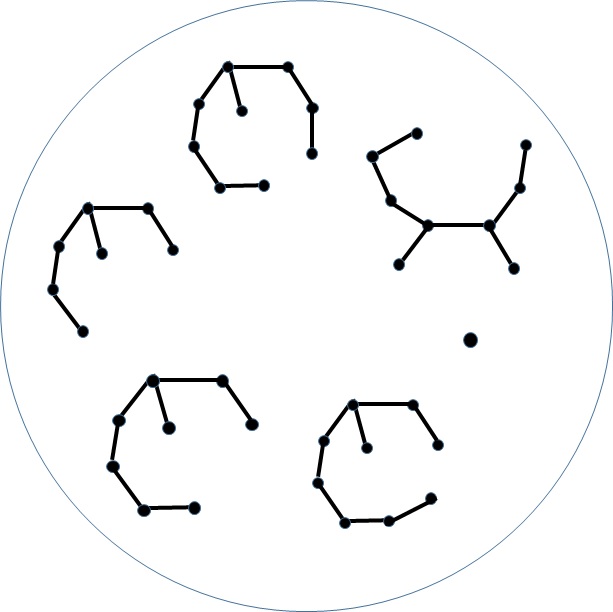}
\caption{An asymmetric graph with 43 vertices and 37 edges}
\end{figure}

Since removing any edge will result in two vertices that are each not unique, it follows that $ai(43K_{1})=37$ which by Proposition 1 implies that $ai(K_{43})=37$. What led to this improvement over $ai(K_{n})=n-2$ is the presence of multiple non-isomorphic asymmetric graphs of the same order. The more of these graphs the better the bound will be.
	 In general when $n\geq 15$, $ai\left( K_{n}\right) =n-t_{n}$ where $t_{n}$ is the number of distinct asymmetric trees that exist on a total of $n$ vertices. We note that in general, $t_{n}$ is not known. In fact, $t_{n}$ relies on the number of asymmetric trees on a fixed number of vertices, which is unknown. T. D. Noe and Alois P. Heinz computed the number of asymmetric trees on $n$ vertices for $n$ = 1...1000 (OEIS A000220 \cite{OEIS}).
We can create a rough general lower bound using multiple copies of the asymmetric tree with seven vertices and six edges. This bound can be improved by taking graphs that are not isomorphic (if they can be identified). Let $G$ be a graph with $n$ vertices. We first isolate a single vertex. Then we construct $\left\lfloor \frac{n-1}{7}\right\rfloor $ sets with seven vertices and one remaining set with $n-1-7\left\lfloor \frac{n-1}{7}\right\rfloor $ vertices. From this we create $\left\lfloor \frac{n-1}{7}\right\rfloor -1$ asymmetric trees with seven vertices and one asymmetric tree with $n-1-7\left( \left\lfloor \frac{n-1}{7}\right\rfloor -1\right) $ vertices. The total number of edges in this graph will be $6\left\lfloor \frac{n-1}{7}\right\rfloor -1+n-1-7\left( \left\lfloor \frac{n-1}{7}\right\rfloor -1\right) -1=\allowbreak n-\left\lfloor \frac{1}{7}n-\frac{1}{7}\right\rfloor +4$
	
Hence we have proved the following general formula which for specific cases can be improved.
\begin{theorem}
For $n\geq 16$, $\allowbreak n-\left\lfloor \frac{1}{7}n-\frac{1}{7}\right\rfloor +4\leq ai(K_{n})\leq n-2$. 
\end{theorem}
    By taking the disjoint union of non-isomorphic asymmetric trees we can construct graphs that have a relatively larger asymmetric index. For example, if we were to take the disjoint union of all non-isomorphic asymmetric trees up to 17 vertices (quantities given by T. D. Noe and Alois P. Heinz \cite{OEIS}) we would create a graph with 43,914 vertices and 41,196 edges. Here $\frac{ai(G)}{n}\approx 0.938$. It appears that by taking the disjoint union of all non-isomorphic asymmetric trees with larger orders, $\frac{ai(G)}{n}\rightarrow 1$.

\subsection{Complete bipartite graphs}
	
	We next present bounds for complete bipartite graphs, $K_{a,b}$.
	
\begin{theorem}	
For $n \geq 6, \; \left\lfloor \frac{n-1}{2}\right\rfloor \leq ai\left( K_{1,n-1}\right) \leq  n-1 $.
\end{theorem}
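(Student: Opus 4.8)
The plan is to establish the two bounds independently, exploiting that $K_{1,n-1}$ has a center $u$ of degree $n-1$ together with $n-1$ leaves, each of whose only neighbor is $u$, and that $\Aut{K_{1,n-1}}$ is the symmetric group permuting the leaves arbitrarily.

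For the lower bound I would avoid invoking the lemma that a set of $t$ mutually transposable vertices forces $ai(G)\ge\lfloor\frac{t-1}{2}\rfloor$ directly, since with $t=n-1$ it yields only $\lfloor\frac{n-2}{2}\rfloor$; instead I would run a sharper incidence count. Suppose $K_{1,n-1}$ is turned into an asymmetric graph $G'$ by deleting a set $R$ of $r$ edges and adding a set $S$ of $s$ edges, and call a leaf \emph{touched} if it is an endpoint of some edge of $R\cup S$. The crucial observation is that two untouched leaves $v_i,v_j$ retain neighborhood exactly $\{u\}$ in $G'$, so the transposition $(v_i\,v_j)$ is a nontrivial automorphism of $G'$, a contradiction; hence at most one leaf is untouched and at least $n-2$ leaves are touched. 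Every deleted edge is a spoke $uv_i$ and touches just one leaf, while every added edge joins two leaves and touches at most two, so the number of touched leaves is at most $r+2s$. Therefore $r+2s\ge n-2$, and combining with $r+2s\le 2(r+s)$ gives $r+s\ge\lceil\frac{n-2}{2}\rceil=\lfloor\frac{n-1}{2}\rfloor$. The point that makes this work is that deletions are only half as efficient as additions at distinguishing leaves, and this is exactly what sharpens the generic lemma to the claimed bound.

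For the upper bound when $n\ge 7$ I would keep all $n-1$ spokes and add edges only among the leaves, producing the join $\{u\}\vee H$, where $H$ is the graph on the leaves that I choose. Taking $H$ to be the path $v_1v_2\cdots v_{n-1}$ together with the extra chord $v_2v_4$ costs exactly $(n-2)+1=n-1$ additions and no deletions. Since $H$ is a path on $n-1\ge 6$ vertices with the chord $v_2v_4$, Theorem~2.1 tells us $H$ is asymmetric; moreover $u$ has degree $n-1$, which exceeds every leaf degree (at most $4$) in $\{u\}\vee H$, so $u$ is fixed by every automorphism. Any automorphism of $\{u\}\vee H$ then restricts to an automorphism of the induced leaf graph $H$, which is trivial, so the join is asymmetric and $ai(K_{1,n-1})\le n-1$.

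The step I expect to be the real obstacle is the base case $n=6$: no asymmetric graph exists on $n-1=5$ vertices, so the join construction is unavailable and at least one spoke must be deleted. Here I would give an explicit $5$-move transformation, for example deleting the spokes $uv_4,uv_5$ and adding the edges $v_1v_2,\,v_1v_4,\,v_4v_5$, and then verify asymmetry by a short degree/neighbor argument: the two degree-$3$ vertices $u,v_1$ are separated because only $u$ has a degree-$1$ neighbor, the two degree-$1$ vertices $v_3,v_5$ are separated by the degree of their neighbor, and the two degree-$2$ vertices $v_2,v_4$ are separated likewise, forcing every vertex to be fixed. More broadly, the delicate thread running through the whole upper bound is ruling out accidental symmetries — for instance the two base vertices of any triangle are interchangeable unless their remaining neighborhoods differ — which is precisely what dictates the particular chord $v_2v_4$ and the particular decoration used in the $n=6$ case.
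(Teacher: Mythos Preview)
Your argument is correct, and in fact your lower bound is more careful than the paper's. The paper simply invokes Lemma~1.4 on the $n-1$ leaves, which as you observe literally yields only $\lfloor\frac{n-2}{2}\rfloor$; your refinement---counting that a deleted spoke touches only one leaf while an added edge touches two, so that $r+2s\ge n-2$---is exactly what is needed to recover the extra unit when $n$ is odd, and it is a genuine improvement over the paper's one-line citation.

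For the upper bound the two approaches share the same first step, adding $n-2$ edges to make the leaves into a path, but diverge on the final move. The paper observes that this intermediate graph is a wheel $W_n$ minus one rim edge, and then appeals to the wheel result to delete a single spoke incident to one of the two degree-$2$ rim vertices; this works uniformly for all $n\ge 6$ and costs $(n-2)+1=n-1$. You instead \emph{add} the chord $v_2v_4$ and argue via Theorem~2.1 that the leaf graph $H=P_{n-1}+v_2v_4$ is asymmetric, hence so is $K_1\vee H$; this is equally valid for $n\ge 7$ but forces you to handle $n=6$ by an ad~hoc five-move construction, since no asymmetric graph on five vertices exists. Your $n=6$ example is correct (the degree/neighbor-degree check you sketch does pin down every vertex), so nothing is missing---the trade-off is simply that the paper's deletion-based finish avoids the case split at the cost of invoking the wheel theorem rather than the path theorem.
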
	
\begin{proof}
The lower bound follows by Theorem 9. For the upper bound, note that a path $P_{n-1}$ can be formed using each vertex of degree one, adding $n-2$ edges to the graph. The resulting graph is one edge short of being a wheel graph, and by Theorem 7 we need only remove one additional edge to make the graph asymmetric.  
\end{proof}

\begin{theorem}When $a\geq 2$ and $b\geq 5$, $\left\lfloor \frac{a}{2}\right\rfloor +\left\lfloor \frac{b}{2}\right\rfloor \leq ai\left( K_{a,b}\right) \leq ab-\left( a+b-1\right) $.
\end{theorem}	
	
\begin{proof}Since there exists an asymmetric graph in $t\geq 7$ vertices, we may remove all but $a+b-1$ edges from $K_{a,b}$ and obtain an asymmetric graph. The lower bound follows from Lemma 9.
\end{proof}
\subsection{Cartesian products of paths and cycles}
	We next investigate the asymmetric index for grids and cylinders.
\begin{theorem}For all $r,s\geq 2$, $ai(P_{y}\Box P_{x})=1$.
\end{theorem}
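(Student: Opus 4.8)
The plan is to prove the two inequalities $ai(P_r\times P_s)\ge 1$ and $ai(P_r\times P_s)\le 1$ separately after fixing coordinates. Label the vertices of $P_r\times P_s$ by ordered pairs $(i,j)$ with $1\le i\le r$ and $1\le j\le s$, where $(i,j)$ and $(i',j')$ are adjacent exactly when they agree in one coordinate and differ by one in the other. For the lower bound, the row reflection $h\colon(i,j)\mapsto(r+1-i,j)$ is a nontrivial automorphism of the grid whenever $r\ge 2$, so $P_r\times P_s$ is non-asymmetric and hence $ai(P_r\times P_s)\ge 1$. The one degenerate case is $P_2\times P_2\cong C_4$, which has only four vertices and therefore, as noted before Theorem 1.2, cannot be made asymmetric at all; I assume throughout that $rs\ge 6$, i.e. $(r,s)\ne(2,2)$.

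For the upper bound I would exhibit a single edge whose addition destroys every automorphism. The key structural fact is that $\Aut(P_r\times P_s)$ is the Klein four-group $\{e,h,v,hv\}$ generated by the two axis reflections when $r\ne s$, and is the full dihedral group of order $8$ (adjoining the transpose $\tau\colon(i,j)\mapsto(j,i)$ and the rotations) when $r=s$. Accordingly I would split into two cases. When $r\ne s$, add the corner–diagonal edge joining $(1,1)$ and $(2,2)$; a direct check shows that none of $h$, $v$, $hv$ fixes the unordered pair $\{(1,1),(2,2)\}$ once $(r,s)\ne(2,2)$. When $r=s$ (so $r=s\ge 3$), the corner diagonal lies on the fixed axis of $\tau$ and so is preserved by the transpose; I would instead add the offset diagonal joining $(1,2)$ and $(2,3)$, and verify that none of the eight elements of the dihedral group fixes $\{(1,2),(2,3)\}$.

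Breaking the known symmetries is only necessary, so the substantive step is to show the modified graph $G'$ has \emph{no} automorphism at all. Here I would exploit that $P_r\times P_s$ is triangle-free, so the added unit-square diagonal is the unique chord producing triangles: it lies in exactly two triangles sharing it, and this singles out the added edge and hence its endpoint pair. In the case $r\ne s$, the endpoints $(1,1)$ and $(2,2)$ are then separated by degree (a corner of degree $3$ versus a vertex of degree at least $4$), and the two remaining triangle vertices $(1,2)$ and $(2,1)$ are separated because the paths running along row $1$ and column $1$ have the distinct lengths $s$ and $r$. Once these corner vertices are pinned down, uniqueness propagates vertex by vertex across the grid exactly as in the circulant argument of Section 2.4, forcing $\Aut(G')$ to be trivial; an analogous propagation, anchored at the triangle pair $\{(1,2),(2,3)\}$, handles the square case.

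I expect the main obstacle to be this last, local-to-global, step rather than the symmetry bookkeeping. Confirming that no \emph{new} automorphism is created, and in the square case genuinely killing $\tau$ together with the second diagonal reflection and the rotations, is delicate, and the cleanest route is the inductive ``label the forced unique neighbor of already-labelled vertices'' propagation used earlier. The real care lies in the first few anchor vertices: one must ensure their identities are forced by intrinsic degree and adjacency data (unique triangle membership, unequal arm lengths, distance to a distinguished vertex) rather than merely by the global symmetries we have broken, and in checking the smallest instances such as $P_3\times P_3$, where endpoint degrees can coincide and a secondary distinguishing feature is needed.
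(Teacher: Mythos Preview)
Your overall plan is sound and is genuinely different from the paper's: the paper \emph{removes} the edge between a corner and its neighbour and then runs a uniform distance--and--path-count argument (no $r=s$ versus $r\ne s$ split), whereas you \emph{add} a unit-square diagonal and argue via triangles. Your triangle observation actually buys you more than you use. Once you know the added edge $e$ is the unique edge lying in two triangles, every automorphism $\sigma$ of $G'=G+e$ fixes $e$ setwise, hence $\sigma$ permutes $E(G')\setminus\{e\}=E(G)$ and is therefore an automorphism of the original grid $G$ stabilising the endpoint pair of $e$. Since you already know $\Aut(G)$ exactly, you are done the moment you check that no nontrivial element of $\Aut(G)$ stabilises that pair; the propagation step is unnecessary. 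For $r\ne s$ this goes through cleanly with your corner diagonal, and your remark about $P_2\times P_2$ is correct (the paper silently ignores it).

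There is, however, a real failure in the square case. For $r=s=3$ your edge $\{(1,2),(2,3)\}$ is swapped by the anti-diagonal reflection $(i,j)\mapsto(r+1-j,\,r+1-i)$, so that nontrivial automorphism of the grid survives in $G'$ and $G'$ is not asymmetric. This is not a matter of ``a secondary distinguishing feature'' as you suspected; the construction itself breaks. Worse, in the $3\times 3$ grid \emph{every} unit-square diagonal is stabilised by some nontrivial dihedral symmetry (the four diagonals through the centre are fixed by one of the two diagonal reflections, and each of the remaining four is swapped by one of them), so no choice of unit-square diagonal can work here and your triangle mechanism is unavailable. You would have to add a longer chord such as $\{(1,1),(3,2)\}$ and argue asymmetry by hand for that one case, or---more simply---follow the paper and delete an edge instead, which handles all grids uniformly.
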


\begin{proof}
All automorphisms of a grid graph $P_{y}\Box P_{x}$ are compositions of horizontal or vertical reflections about the midlines of the grid. As a result in the case where $x$ is odd removing an edge along the middle vertical line will still leave a non-trivial automorpshism which reflects about this vertical line. Removing any other vertical edge will result in an asymmetric graph. Similarly in the case where $y$ is odd removing an edge along the middle horizontal line will still leave a non-trivial automorpshism which reflects about this horizontal line. Removing any other horizontal edge results in an asymmetric graph. If $x$ is even we can remove any vertical edge and obtain an asymmetric graph. If $y$ is even we can remove any horizontal edge and obtain an asymmetric graph.

\end{proof}

\begin{theorem}
For all $p\geq 2$ and $q\geq 3$, $ai(P_{p}\Box C_{q})=2$.
\end{theorem}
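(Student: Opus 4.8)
The plan is to establish the two bounds $ai(P_r\times C_s)\ge 2$ and $ai(P_r\times C_s)\le 2$ separately. Throughout, I would give the vertices coordinates $(i,j)$ with $i$ indexing the factor $P_r$ (so $i$ ranges over the $r$ ``layers'') and $j\in\mathbb{Z}_s$ indexing the factor $C_s$. Each layer $i$ carries a copy of $C_s$, and for each fixed $j$ the vertices $(0,j),(1,j),\dots,(r-1,j)$ form a copy of $P_r$. The vertices in the two boundary layers $i=0$ and $i=r-1$ have degree $3$, while every interior vertex has degree $4$; the three ``visible'' symmetries to defeat are the cycle rotations, the cycle reflections, and the path-flip $(i,j)\mapsto(r-1-i,j)$.

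For the lower bound I would first rule out a single edge removal. If the removed edge lies inside a layer, say $(i,j_0)(i,j_0+1)$, then the cycle reflection $j\mapsto (2j_0+1)-j$, combined with the identity on the path, fixes that edge setwise and is a nontrivial automorphism of the resulting graph, so it is not asymmetric. If the removed edge is a path edge $(i_0,j_0)(i_0+1,j_0)$, then the cycle reflection $j\mapsto 2j_0-j$ fixes the entire column $j=j_0$ pointwise, hence preserves the deleted edge, and is again a surviving nontrivial automorphism. Thus no single deletion works. I would then rule out a single edge addition by the same philosophy: for each type of added edge exhibit a nontrivial automorphism of $P_r\times C_s$ that stabilizes it (either fixing both endpoints or interchanging them), so that it remains an automorphism afterward. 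For a chord added inside a layer one uses the cycle reflection $j\mapsto (j_1+j_2)-j$ through the midpoint of the two endpoints; for an edge joining two layers in the same column one uses a cycle reflection fixing that column; and for an edge joining symmetric layers with $i_1+i_2=r-1$ one combines the path-flip with the cycle reflection swapping the two cycle coordinates.

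For the upper bound I would exhibit two modifications that destroy every symmetry at once. The path-flip is broken as soon as one performs an asymmetric modification in a single boundary layer, since then layer $0$ and layer $r-1$ are no longer interchangeable, and the cycle rotations and reflections are broken by the same two-edge ``three distinct arcs'' device used for $C_s$ in Theorem 2.2. Concretely, when $s$ is large enough to admit chords I would add two chords inside layer $0$ splitting its cycle into three arcs of pairwise distinct lengths; one then checks that the degree-raised vertices pin down the cycle action while the distinguished boundary layer pins down the flip, so the whole graph is asymmetric and $ai\le 2$. The small cases, in particular $s=3$ where a layer is a triangle and admits no chord, would be handled separately by an explicit remove-one/add-one (or remove-two) modification near a boundary layer.

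The main obstacle is the edge-addition half of the lower bound. Deletions are easy because a deleted edge always sits in a reflection-invariant position, but an added edge can be a long diagonal $(i_1,j_1)(i_2,j_2)$ with $i_1\ne i_2$, $i_1+i_2\ne r-1$, and $j_1\ne j_2$, and for such an edge neither the path-flip nor any single cycle reflection simultaneously respects both coordinates. Showing that a nontrivial symmetry nevertheless survives every such diagonal addition, or, failing a global automorphism, that some pair of vertices can still be transposed, is the delicate step and is where the argument must be most careful; this case analysis, together with the small-$s$ instances of the upper bound, is the technical heart of the proof.
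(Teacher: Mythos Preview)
Your deletion half of the lower bound is fine and, in fact, more careful than the paper's: the paper's proof of $ai\ge 2$ only says ``if we remove a single edge $uv$ there is an automorphism transposing $u$ and $v$'' and never treats additions at all. You rightly see that additions must also be excluded, and you correctly isolate the diagonal edge $(i_1,j_1)(i_2,j_2)$ with $i_1\ne i_2$, $i_1+i_2\ne r-1$, $j_1\ne j_2$ as the case where no element of $\Aut{P_r\times C_s}$ stabilises the new edge. The problem is that this case cannot be salvaged: for $r\ge 3$ such an addition can already make the graph asymmetric, so the statement as written is false. Concretely, in $P_3\times C_5$ add the single edge $(0,0)(1,1)$. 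Then $(1,1)$ is the unique vertex of degree $5$, and checking the degree multisets of neighbours of $(1,1)$ one finds that all five neighbours $(1,0),(1,2),(0,0),(0,1),(2,1)$ are pairwise distinguished (the only near-coincidence is $(0,0)$ versus $(1,2)$, and these separate because the unique degree-$4$ neighbour of $(0,0)$ is $(1,0)$, which has a degree-$5$ neighbour, while the unique degree-$4$ neighbour of $(1,2)$ is $(1,3)$, which does not). Once those six vertices are fixed, propagating along the layers forces every vertex to be fixed, so the automorphism group is trivial and $ai(P_3\times C_5)=1$. Your instinct that the diagonal case is ``the technical heart'' is exactly right, but it is a counterexample rather than a case to be handled; the lower bound $ai\ge 2$ only survives when $r=2$, where the path-flip combined with the reflection $j\mapsto j_1+j_2-j$ always stabilises the added diagonal.

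For the upper bound the paper takes a different route from yours: rather than adding two chords in a boundary layer, it deletes two edges $uv$ and $uw$ incident to a common boundary vertex $u$ and argues that the pair of distances $\bigl(d(x,u),d(x,v)\bigr)$ determines $x$. Both constructions give $ai\le 2$; yours has the virtue of recycling the $C_n$ argument directly, while the paper's avoids the small-$s$ case split since the two deletions are available for every $s\ge 3$.
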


\begin{proof}
The only automorphisms of the torus are reflections and rotations.
For the lower bound note that if we remove a single edge $uv$ from $G$ then there is a line of symmetry passing through the missing edge. Hence $ai(P_{p}\Box C_{q})\geq 2$.

\begin{figure}[h!]
\center
\includegraphics[scale=1.0]{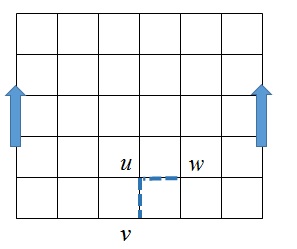}
\caption{A cylinder}
\end{figure}
For the upper bound we remove the edges $uv$ and $uw$ from $P_{p}\Box C_{q}$. This creates a graph without reflectional or rotational symmetries.
\end{proof}

\section{Conclusion}
We have shown that there exist infinite families of graphs with a small asymmetric index, including those with an asymmetric index of $1$. However the largest possible asymmetric index for a graph of order $n$ is not known. We believe that this will occur with complete graphs and for graphs that are the disjoint union of isolated vertices. We formally state this in the following conjecture.

\begin{conjecture} For graphs on $n$ vertices, the graphs with the highest asymmetric index are the complete graph and the discrete graph.
\end{conjecture}

In this paper we studied the asymmetric index of connected graphs. However it would be an interesting problem to investigate the asymmetric index of the graphs that are not connected. Of course, a graph that consists of the disjoint union of non-isomorphic asymmetric graphs is asymmetric. Therefore one approach would be to add or remove edges from the various components to make them asymmetric and so that the components are pairwise non-isomorphic. However there is also the possibility of adding a smaller number of edges that connect two components which may result in an asymmetric subgraph.

\section{Acknowledgements} The authors would like to thank two anonymous referees for a careful reading of our paper and their constructive comments. We would also like thank Rigoberto Fl\'{o}rez and Brendan Rooney for helpful guidance. This research was supported by the National Science Foundation Research for Undergraduates Award 1659075.

\bibliographystyle{plain}
\bibliography{cite5}
\end{document}